\newcommand{\area}{\mathsf{area}}
\newcommand{\dinv}{\mathsf{dinv}}
\newcommand{\touch}{\mathsf{touch}}
\newcommand{\D}{\mathsf{D}} 
\newcommand{\LD}{\mathsf{LD}} 
\newcommand{\LSQ}{\mathsf{LSQ}} 
\DeclareFontFamily{U}{bigshuffle}{}
\DeclareFontShape{U}{bigshuffle}{m}{n}{
	<5-8> s*[1.7] shuffle7
	<8->  s*[1.7] shuffle10
}{}
\DeclareSymbolFont{BigShuffle}{U}{bigshuffle}{m}{n}
\DeclareMathSymbol\bigshuffle{\mathop}{BigShuffle}{"001}
\DeclareMathSymbol\bigcshuffle{\mathop}{BigShuffle}{"002}
\renewcommand{\H}{\widetilde{H}}
\newcommand{\N}{\mathbb{N}}
\newcommand{\qbinom}[2]{\genfrac{[}{]}{0pt}{}{#1}{#2}}
\newdimen\qrr@tikz@sharp@z@
	\edef\pgf@marshal{\noexpand\pgfutil@in@{and}{\pgfgetarrowoptions{sharp >}}}%
	\edef\pgf@tempa{\pgfgetarrowoptions{sharp >}}
	\qrr@tikz@sharp@parse\pgfgetarrowoptions{sharp >}and-\pgfgetarrowoptions{sharp >}\@qrr@tikz@sharp@parse
	\let\qrr@tikz@sharp@max\pgfmathresult
	\pgfmathsetlength\pgf@xa{.5*\pgflinewidth * tan(\qrr@tikz@sharp@max)}%
	\edef\pgf@marshal{\noexpand\pgfutil@in@{and}{\pgfgetarrowoptions{sharp >}}}%
	\edef\pgf@tempa{\pgfgetarrowoptions{sharp >}}
	\qrr@tikz@sharp@parse\pgfgetarrowoptions{sharp >}and-\pgfgetarrowoptions{sharp >}\@qrr@tikz@sharp@parse
	\pgfmathsetlength\pgf@ya{.5*\pgflinewidth * tan(max(\pgf@tempa,\pgf@tempb,0))}%
	\pgfmathsetlength\pgf@xa{-.5*\pgflinewidth * tan(\pgf@tempa)}%
	\pgfmathsetlength\pgf@xb{-.5*\pgflinewidth * tan(\pgf@tempb)}%
	\edef\pgf@marshal{\noexpand\pgfutil@in@{and}{\pgfgetarrowoptions{sharp <}}}%
	\edef\pgf@tempa{\pgfgetarrowoptions{sharp <}}
\qrr@tikz@sharp@parse\pgfgetarrowoptions{sharp <}and-\pgfgetarrowoptions{sharp <}\@qrr@tikz@sharp@parse
	\let\qrr@tikz@sharp@max\pgfmathresult
	\pgfmathsetlength\pgf@xa{.5*\pgflinewidth * tan(\qrr@tikz@sharp@max)}%
	\edef\pgf@marshal{\noexpand\pgfutil@in@{and}{\pgfgetarrowoptions{sharp <}}}%
	\edef\pgf@tempa{\pgfgetarrowoptions{sharp <}}
\qrr@tikz@sharp@parse\pgfgetarrowoptions{sharp <}and-\pgfgetarrowoptions{sharp <}\@qrr@tikz@sharp@parse
	\pgfmathsetlength\pgf@ya{.5*\pgflinewidth * tan(max(\pgf@tempa,\pgf@tempb,0))}%
	\pgfmathsetlength\pgf@xa{-.5*\pgflinewidth * tan(\pgf@tempa)}%
	\pgfmathsetlength\pgf@xb{-.5*\pgflinewidth * tan(\pgf@tempb)}%
\def\qrr@tikz@sharp@parse#1and#2\@qrr@tikz@sharp@parse{\def\pgf@tempa{#1}\def\pgf@tempb{#2}}
\newcommand\multiset[2]%
\let\existstemp\exists \renewcommand*{\exists}{\mathop \existstemp}
\let\foralltemp\forall \renewcommand*{\forall}{\mathop \foralltemp}
\def\quotient#1#2{\raise1ex\hbox{$#1$}\Big/\lower1ex\hbox{$#2$}}
\newcommand{\<}{\langle}
\renewcommand{\>}{\rangle}
\theoremstyle{plain}
\theoremstyle{definition}
\newtheorem{theorem}{Theorem}[section]
\newtheorem{conjecture}[theorem]{Conjecture}
\newtheorem{definition}[theorem]{Definition}
\theoremstyle{remark}
\renewenvironment{proof}[1][\proofname]{%
	\par\pushQED{\qed}\normalfont%
	\topsep6\p@\@plus6\p@\relax
	\trivlist\item[\hskip\labelsep\bfseries#1\@addpunct{.}]%
	\ignorespaces
}{%
	\qedhere 
}
\DeclareRobustCommand*{\bfseries}{%
	\not@math@alphabet\bfseries\mathbf
	\fontseries\bfdefault\selectfont
	\boldmath
}
\title{``Pushing'' our way from the valley Delta to the generalised valley Delta}
\author{Alessandro Iraci}
\address{Universit\'e Libre de Bruxelles (ULB)\\D\'epartement de Math\'ematique\\ Boulevard du Triomphe, B-1050 Bruxelles\\ Belgium}\email{airaci@ulb.be}
\author{Anna Vanden Wyngaerd}
\address{Universit\'e Libre de Bruxelles (ULB)\\D\'epartement de Math\'ematique\\ Boulevard du Triomphe, B-1050 Bruxelles\\ Belgium}\email{anvdwyng@ulb.ac.be}
\begin{document}
    \begin{abstract}
        In [Haglund, Remmel, Wilson 2018] the authors state two versions of the so called Delta conjecture, the rise version and the valley version. Of the former, they also give a more general statement in which zero labels are also allowed. In [Qiu, Wilson 2020], the corresponding generalisation of the valley version is also formulated.

        In [D'Adderio, Iraci, Vanden Wyngaerd 2020], the authors use a pushing algorithm to prove the generalised version of the shuffle theorem. An extension of that argument is used in [Iraci, Vanden Wyngaerd 2020] to formulate a valley version of the (generalised) Delta square conjecture, and to suggest a symmetric function identity later stated and proved in [D'Adderio, Romero 2020].

        In this paper, we use the pushing algorithm together with the aforementioned symmetric function identity in order to prove that the valley version of the Delta conjecture implies the valley version of the generalised Delta conjecture, which means that they are actually equivalent.

        Combining this with the results in [Iraci, Vanden Wyngaerd 2020], we prove that the valley version of the Delta conjecture also implies the corresponding generalised Delta square conjecture.
    \end{abstract}   
    \maketitle
    \section{Introduction}
    In \cite{Haglund-Remmel-Wilson-2018}, Haglund, Remmel and Wilson conjectured a combinatorial formula for $\Delta_{e_{n-k-1}}'e_n$ in terms of decorated labelled Dyck paths, which they called \emph{Delta conjecture}, after the so called delta operators $\Delta_f'$ introduced by Bergeron, Garsia, Haiman, and Tesler \cite{Bergeron-Garsia-Haiman-Tesler-Positivity-1999} for any symmetric function $f$. There are two versions of the conjecture, referred to as the \emph{rise} and the \emph{valley} version. 
    
    The case $k=0$ of the Delta conjecture is the famous \emph{shuffle theorem} which was proved by Carlsson and Mellit \cite{Carlsson-Mellit-ShuffleConj-2018}, using the \emph{compositional refinement} formulated in \cite{Haglund-Morse-Zabrocki-2012}. The shuffle theorem, thanks to the famous \emph{$n!$ conjecture}, now $n!$ theorem of Haiman \cite{Haiman-nfactorial-2001}, gives a combinatorial formula for the Frobenius characteristic of the $\mathfrak{S}_n$-module of diagonal harmonics studied by Garsia and Haiman.
    
    Recently, a compositional refinement of the rise version of the Delta conjecture was announced in \cite{DAdderio-Iraci-VandenWyngaerd-Theta-2020} and proved in \cite{DAdderio-Mellit-Compositional-Delta-2020}. These breakthroughs rely heavily on the novel \emph{Theta operators} introduced in \cite{DAdderio-Iraci-VandenWyngaerd-Theta-2020}. The valley version of the Delta conjecture remains an open problem today.
    
    The \emph{generalised Delta conjecture} is a combinatorial formula for $\Delta_{h_m}\Delta_{e_{n-k-1}}'e_n$ in terms of decorated partially labelled Dyck paths (the rise version first appeared in \cite{Haglund-Remmel-Wilson-2018} and the valley version in \cite{Qiu-Wilson-2020}). 
    
    Using the Theta operators, we conjectured a \emph{touching refinement} (where the number of times the Dyck path returns to the main diagonal is specified) of the valley version of the (generalised) Delta conjecture \cite{Iraci-VandenWyngaerd-2020}.  
    
    In this paper, we prove that the touching refinement of the valley version of the Delta conjecture implies the touching refinement of the valley version of the generalised Delta conjecture. Our proof will rely on a new symmetric function identity proved in \cite{DAdderio-Romero-Theta-Identities-2020}, which was suggested by a combinatorial argument we call the \emph{pushing algorithm} first described in \cite{DAdderio-Iraci-VandenWyngaerd-Theta-2020} for paths with no decorations and then extended in \cite{Iraci-VandenWyngaerd-2020} to paths with decorated contractible valleys. 
    
    Combining this result with the results in \cite{Iraci-VandenWyngaerd-2020}, we obtain that, if the valley version of the Delta conjecture is true, then the valley version of the generalised Delta square conjecture is also true. Thus, the main conjecture implies three other statements: the generalised version, the square version, and the generalised square version.
    


    \section{Symmetric functions}
    For all the undefined notations and the unproven identities, we refer to \cite{DAdderio-Iraci-VandenWyngaerd-TheBible-2019}*{Section~1}, where definitions, proofs and/or references can be found. 

    We denote by $\Lambda$ the graded algebra of symmetric functions with coefficients in $\mathbb{Q}(q,t)$, and by $\<\, , \>$ the \emph{Hall scalar product} on $\Lambda$, defined by declaring that the Schur functions form an orthonormal basis.
    
    The standard bases of the symmetric functions that will appear in our calculations are the monomial $\{m_\lambda\}_{\lambda}$, complete $\{h_{\lambda}\}_{\lambda}$, elementary $\{e_{\lambda}\}_{\lambda}$, power $\{p_{\lambda}\}_{\lambda}$ and Schur $\{s_{\lambda}\}_{\lambda}$ bases.
    
    For a partition $\mu \vdash n$, we denote by \[ \H_\mu \coloneqq \H_\mu[X] = \H_\mu[X; q,t] = \sum_{\lambda \vdash n} \widetilde{K}_{\lambda \mu}(q,t) s_{\lambda} \] the \emph{(modified) Macdonald polynomials}, where \[ \widetilde{K}_{\lambda \mu} \coloneqq \widetilde{K}_{\lambda \mu}(q,t) = K_{\lambda \mu}(q,1/t) t^{n(\mu)} \] are the \emph{(modified) Kostka coefficients} (see \cite{Haglund-Book-2008}*{Chapter~2} for more details). 
    
    Macdonald polynomials form a basis of the ring of symmetric functions $\Lambda$. This is a modification of the basis introduced by Macdonald \cite{Macdonald-Book-1995}.
    
    If we identify the partition $\mu$ with its Ferrer diagram, i.e. with the collection of cells $\{(i,j)\mid 1\leq i\leq \mu_i, 1\leq j\leq \ell(\mu)\}$, then for each cell $c\in \mu$ we refer to the \emph{arm}, \emph{leg}, \emph{co-arm} and \emph{co-leg} (denoted respectively as $a_\mu(c), l_\mu(c), a_\mu(c)', l_\mu(c)'$) as the number of cells in $\mu$ that are strictly to the right, above, to the left and below $c$ in $\mu$, respectively.
    
    Let $M \coloneqq (1-q)(1-t)$. For every partition $\mu$, we define the following constants:
    
    \[
        B_{\mu} \coloneqq B_{\mu}(q,t) = \sum_{c \in \mu} q^{a_{\mu}'(c)} t^{l_{\mu}'(c)}, \qquad \qquad
        \Pi_{\mu} \coloneqq \Pi_{\mu}(q,t) = \prod_{c \in \mu / (1,1)} (1-q^{a_{\mu}'(c)} t^{l_{\mu}'(c)}). \]
    
    We will make extensive use of the \emph{plethystic notation} (cf. \cite{Haglund-Book-2008}*{Chapter~1}).
    
    We need to introduce several linear operators on $\Lambda$.

    \begin{definition}[\protect{\cite{Bergeron-Garsia-ScienceFiction-1999}*{[3.11]}}]
        \label{def:nabla}
        We define the linear operator $\nabla \colon \Lambda \rightarrow \Lambda$ on the eigenbasis of Macdonald polynomials as \[ \nabla \H_\mu = e_{\lvert \mu \rvert}[B_\mu] \H_\mu. \]
    \end{definition}
    
    \begin{definition}
        \label{def:pi}
        We define the linear operator $\mathbf{\Pi} \colon \Lambda \rightarrow \Lambda$ on the eigenbasis of Macdonald polynomials as \[ \mathbf{\Pi} \H_\mu = \Pi_\mu \H_\mu \] where we conventionally set $\Pi_{\varnothing} \coloneqq 1$.
    \end{definition}
    
    \begin{definition}
        \label{def:delta}
        For $f \in \Lambda$, we define the linear operators $\Delta_f, \Delta'_f \colon \Lambda \rightarrow \Lambda$ on the eigenbasis of Macdonald polynomials as \[ \Delta_f \H_\mu = f[B_\mu] \H_\mu, \qquad \qquad \Delta'_f \H_\mu = f[B_\mu-1] \H_\mu. \]
    \end{definition}
    
    Observe that on the vector space of symmetric functions homogeneous of degree $n$, denoted by $\Lambda^{(n)}$, the operator $\nabla$ equals $\Delta_{e_n}$. 
    
    We also introduce the Theta operators, first defined in \cite{DAdderio-Iraci-VandenWyngaerd-Theta-2020}
    
    \begin{definition}
        \label{def:theta}
         For any symmetric function $f\in \Lambda^{(n)}$ we introduce the following \emph{Theta operators} on $\Lambda$: for every $F \in \Lambda^{(m)}$ we set
        \begin{equation*}
            \Theta_f F  \coloneqq 
            \left\{\begin{array}{ll}
                0 & \text{if } n \geq 1 \text{ and } m=0 \\
                f \cdot F & \text{if } n=0 \text{ and } m=0 \\
                \mathbf{\Pi} f^* \mathbf{\Pi}^{-1} F & \text{otherwise}
            \end{array}
            \right. ,
        \end{equation*}
    and we extend by linearly the definition to any $f,F\in \Lambda$.
    \end{definition}
    
    It is clear that $\Theta_f$ is linear, and moreover, if $f$ is homogenous of degree $k$, then so is $\Theta_f$, i.e. \[\Theta_f \Lambda^{(n)} \subseteq \Lambda^{(n+k)} \qquad \text{ for } f \in \Lambda^{(k)}. \]
    
    %
    %
    
    It is convenient to introduce the so called $q$-notation. In general, a $q$-analogue of an expression is a generalisation involving a parameter $q$ that reduces to the original one for $q \rightarrow 1$.
    
    \begin{definition}
        For a natural number $n \in \mathbb{N}$, we define its $q$-analogue as \[ [n]_q \coloneqq \frac{1-q^n}{1-q} = 1 + q + q^2 + \dots + q^{n-1}. \]
    \end{definition}
    
    Given this definition, one can define the $q$-factorial and the $q$-binomial as follows.
    
    \begin{definition}
        We define \[ [n]_q! \coloneqq \prod_{k=1}^{n} [k]_q \quad \text{and} \quad \qbinom{n}{k}_q \coloneqq \frac{[n]_q!}{[k]_q![n-k]_q!} \]
    \end{definition}
    
    \begin{definition}
        For $x$ any variable and $n \in \N \cup \{ \infty \}$, we define the \emph{$q$-Pochhammer symbol} as \[ (x;q)_n \coloneqq \prod_{k=0}^{n-1} (1-xq^k) = (1-x) (1-xq) (1-xq^2) \cdots (1-xq^{n-1}). \]
    \end{definition}
    
    We can now introduce yet another family of symmetric functions.
    
    \begin{definition}
        \label{def:Enk}
        For $0 \leq k \leq n$, we define the symmetric function $E_{n,k}$ by the expansion \[ e_n \left[ X \frac{1-z}{1-q} \right] = \sum_{k=0}^n \frac{(z;q)_k}{(q;q)_k} E_{n,k}. \]
    \end{definition}
    
    Notice that $E_{n,0} = \delta_{n,0}$. Setting $z=q^j$ we get \[ e_n \left[ X \frac{1-q^j}{1-q} \right] = \sum_{k=0}^n \frac{(q^j;q)_k}{(q;q)_k} E_{n,k} = \sum_{k=0}^n \qbinom{k+j-1}{k}_q E_{n,k} \] and in particular, for $j=1$, we get \[ e_n = E_{n,0} + E_{n,1} + E_{n,2} + \cdots + E_{n,n}, \] so these symmetric functions split $e_n$, in some sense.

    The Theta operators will be useful to restate the Delta conjectures in a new fashion, thanks to the following results.
    
    \begin{theorem}[\protect{\cite{DAdderio-Iraci-VandenWyngaerd-Theta-2020}*{Theorem~3.1} }]
        \label{thm:theta-en}
        \[ \Theta_{e_k} \nabla e_{n-k} = \Delta'_{e_{n-k-1}} e_n \]
    \end{theorem}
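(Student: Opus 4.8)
The plan is to unwind the definition of $\Theta_{e_k}$ and carry out the whole computation in the modified Macdonald basis $\{\H_\mu\}$, where $\nabla$, $\Delta'_{e_{n-k-1}}$ and $\mathbf{\Pi}$ all act diagonally; the only non-diagonal ingredient will be the multiplication operator $e_k[X/M]$ hidden inside $\Theta_{e_k}$, so that is where all the real content will concentrate.

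First I would dispose of the degenerate cases. For $k=n$ both sides vanish: $\Theta_{e_n}$ kills the degree-zero input $\nabla e_0 = 1$ by the first branch of Definition~\ref{def:theta}, while $e_{n-k-1}=e_{-1}=0$ on the right. For $k=0$ the statement is the classical identity $\nabla e_n = \Delta'_{e_{n-1}}e_n$, since $\Theta_{e_0}=\mathbf{\Pi}\,(1\cdot)\,\mathbf{\Pi}^{-1}$ acts as the identity. For $1\le k\le n-1$ we are in the generic branch of Definition~\ref{def:theta}, so
\[ \Theta_{e_k}\nabla e_{n-k} \;=\; \mathbf{\Pi}\,\big( e_k[X/M]\cdot \mathbf{\Pi}^{-1}\,\nabla e_{n-k}\big). \]

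Next I would expand both sides using the standard expansion of $e_m$ in the modified Macdonald basis,
\[ e_m \;=\; \sum_{\nu\vdash m} \frac{M\,B_\nu\,\Pi_\nu}{w_\nu}\,\H_\nu, \qquad w_\nu=\prod_{c\in\nu}\big(q^{a_\nu(c)}-t^{l_\nu(c)+1}\big)\big(t^{l_\nu(c)}-q^{a_\nu(c)+1}\big). \]
Since $\nabla\H_\nu = e_{n-k}[B_\nu]\,\H_\nu$ and $\mathbf{\Pi}^{-1}$ cancels the factor $\Pi_\nu$, the input simplifies to
\[ \mathbf{\Pi}^{-1}\nabla e_{n-k} \;=\; \sum_{\nu\vdash n-k} \frac{M\,B_\nu\,e_{n-k}[B_\nu]}{w_\nu}\,\H_\nu. \]
Multiplying by $e_k[X/M]$ and re-applying $\mathbf{\Pi}$ then requires the Pieri-type rule $e_k[X/M]\,\H_\nu=\sum_{\mu\vdash n}d_{\mu\nu}\,\H_\mu$, whose coefficients $d_{\mu\nu}$ are known explicitly. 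On the other side, the same expansion of $e_n$ together with $\Delta'_{e_{n-k-1}}\H_\mu = e_{n-k-1}[B_\mu-1]\,\H_\mu$ gives
\[ \Delta'_{e_{n-k-1}}e_n \;=\; \sum_{\mu\vdash n} \frac{M\,B_\mu\,\Pi_\mu\,e_{n-k-1}[B_\mu-1]}{w_\mu}\,\H_\mu. \]

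Comparing the coefficient of each $\H_\mu$ with $\mu\vdash n$, the overall factor $\Pi_\mu$ cancels and the theorem reduces to the single summation identity
\[ \sum_{\nu\vdash n-k}\frac{M\,B_\nu\,e_{n-k}[B_\nu]}{w_\nu}\,d_{\mu\nu} \;=\; \frac{M\,B_\mu\,e_{n-k-1}[B_\mu-1]}{w_\mu}. \]
This last step is the real obstacle: it is a nontrivial identity among Macdonald constants, and the cleanest way to settle it is to feed the explicit form of the coefficients $d_{\mu\nu}$ into the sum and recognise the result as the known evaluation $e_{n-k-1}[B_\mu-1]$, modulo the $M,B,w$ bookkeeping. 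Equivalently, one can package the family over $k$ into the generating function $\sum_k z^k\,\Theta_{e_k}\nabla e_{n-k}$, match it against $\sum_k z^k\,\Delta'_{e_{n-k-1}}e_n$, and use the splitting of $e_n$ into the $E_{n,k}$ of Definition~\ref{def:Enk} as convenient bookkeeping. I expect the manipulation of the $d_{\mu\nu}$ to be the only genuinely delicate point; everything else is formal.
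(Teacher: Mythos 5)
This statement is not proved in the paper at all: it is imported verbatim as Theorem~3.1 of \cite{DAdderio-Iraci-VandenWyngaerd-Theta-2020}, so there is no in-paper argument to compare against. Judged on its own, your proposal correctly sets up the same reduction that the cited source uses --- handle the degenerate branches of Definition~\ref{def:theta}, write $\Theta_{e_k}\nabla e_{n-k}=\mathbf{\Pi}\,e_k[X/M]\,\mathbf{\Pi}^{-1}\nabla e_{n-k}$, expand everything in the $\H_\mu$ basis where $\nabla$, $\Delta'_{e_{n-k-1}}$ and $\mathbf{\Pi}$ are diagonal, and compare coefficients --- and the degenerate cases and the bookkeeping (including the cancellation of $\Pi_\mu$ and the simplification $e_{n-k}[B_\nu]=T_\nu$ for $\nu\vdash n-k$) are all fine.

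The genuine gap is the final step, which is the entire mathematical content of the theorem. The identity
\[ \sum_{\substack{\nu\vdash n-k \\ \nu\subset\mu}}\frac{M\,B_\nu\,e_{n-k}[B_\nu]}{w_\nu}\,d_{\mu\nu} \;=\; \frac{M\,B_\mu\,e_{n-k-1}[B_\mu-1]}{w_\mu} \]
is asserted with the instruction to ``feed the explicit form of the coefficients $d_{\mu\nu}$ into the sum and recognise the result,'' but this is not an argument: the Pieri coefficients $d_{\mu\nu}$ are products of rational functions attached to the cells of $\mu/\nu$, and the sum over all $\nu\subset\mu$ with $|\mu/\nu|=k$ does not simplify by inspection. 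Closing this step requires a previously established summation lemma --- essentially the identity expressing $e_k[B_\mu-1]\,\Pi_\mu$ (or $h_k$-analogues thereof) as a weighted sum of $\Pi_\nu B_\nu T_\nu/w_\nu$ over such $\nu$, which in the literature is derived from the Macdonald--Pieri machinery and is exactly what the proof in \cite{DAdderio-Iraci-VandenWyngaerd-Theta-2020} invokes. You should either cite such a lemma explicitly or prove it; as written, the proposal reduces the theorem to an unproved identity of comparable difficulty. The suggested alternative via a generating function in $z$ and the $E_{n,k}$ of Definition~\ref{def:Enk} also conflates two different indices (the $k$ in $\Theta_{e_k}$ versus the touching parameter in $E_{n,k}$) and would need substantial justification before it could serve even as bookkeeping.
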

    
    The key symmetric function identity on which our proof relies is the following. We first formulated this identity by studying the combinatorics. Its proof is due to D'Adderio and Romero.
    
    \begin{theorem}[\cite{DAdderio-Romero-Theta-Identities-2020}*{Corollary~9.2}]
        \label{thm:sf-identity}
        Given $m,n,k,r \in \N$, we have
        \begin{align*}
            h_m^\perp \Theta_{e_k} \nabla E_{n-k,r} = \sum_{p=0}^m t^{m-p} \sum_{i=0}^p q^{\binom{i}{2}} \qbinom{r-p+i}{i}_q \qbinom{r}{p-i}_q \Delta_{h_{m-p}} \Theta_{e_{k-i}} \nabla E_{n-m-(k-i), r-p+i}.
        \end{align*}
    \end{theorem}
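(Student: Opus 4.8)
The plan is to prove the identity for all $m$ at once by packaging the skewing operators into the plethystic shift $X\mapsto X+z$. Since $\sum_{m\ge 0}z^m h_m^\perp f=f[X+z]$ for every $f\in\Lambda$ (the shift adds $z^k$ to each power sum $p_k$, and $p_k^\perp=k\,\partial_{p_k}$), summing the asserted identity against $z^m$ turns all of the left-hand sides into the single expression $\big(\Theta_{e_k}\nabla E_{n-k,r}\big)[X+z]$. It therefore suffices to compute this plethystic shift and to check that, after re-expanding in the $E$-basis via Definition~\ref{def:Enk}, its coefficient of $z^m$ is the stated double sum. The shape of the coefficients $q^{\binom{i}{2}}\qbinom{r-p+i}{i}_q\qbinom{r}{p-i}_q$ is the tell-tale signature of the $q$-binomial theorem, so I expect them to appear only at the very end, as the output of a $q$-Vandermonde summation, rather than having to be produced term by term.

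The computation of the shift splits into three mechanisms, already visible in the $m=1$ instance
\[ h_1^\perp\,\Theta_{e_k}\nabla E_{n-k,r}=t\,\Delta_{h_1}\Theta_{e_k}\nabla E_{n-1-k,r}+[r]_q\,\Theta_{e_k}\nabla E_{n-1-k,r-1}+[r]_q\,\Theta_{e_{k-1}}\nabla E_{n-k,r}, \]
which one checks is exactly the $m=1$ case of the statement. First I would record the action of $h_m^\perp$ directly on the $E$'s: from $h_1^\perp\big(e_N[X\tfrac{1-z}{1-q}]\big)=\tfrac{1-z}{1-q}\,e_{N-1}[X\tfrac{1-z}{1-q}]$ and its $h_m^\perp$-analogue one reads off, through Definition~\ref{def:Enk} and the $q$-binomial theorem applied to $h_m[\tfrac{1-z}{1-q}]$, an explicit rule lowering the first index of $E_{N,r}$ by $m$ while redistributing the second index with $q$-binomial weights (this is the source of $\qbinom{r}{p-i}_q$ and of the index shift $r\mapsto r-p+i$). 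Second, I would evaluate the commutator of $h_m^\perp$ with $\nabla$; since $\nabla$ is diagonal on the Macdonald basis with eigenvalue $e_{\lvert\mu\rvert}[B_\mu]$, this commutator is controlled by pure eigenvalue bookkeeping and produces the $t^{m-p}\,\Delta_{h_{m-p}}$ factors. Third, and most delicately, I would commute $h_m^\perp$ past $\Theta_{e_k}=\mathbf{\Pi}\,e_k^{*}\,\mathbf{\Pi}^{-1}$, which is what lowers $\Theta_{e_k}$ to $\Theta_{e_{k-i}}$ and generates the weights $q^{\binom{i}{2}}\qbinom{r-p+i}{i}_q$.

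With the three pieces in hand, I would assemble them in normal order, pushing all the $\Delta_h$'s and $\Theta$'s to the left and all the $h^\perp$'s that act on $E$ to the right, and then sum over the intermediate states produced by iterating the $E$-expansion. The nested $q$-Pochhammer quotients arising from this summation collapse, by a single $q$-Vandermonde (Chu--Vandermonde) identity, into the closed product $q^{\binom{i}{2}}\qbinom{r-p+i}{i}_q\qbinom{r}{p-i}_q$; I would isolate this as a self-contained $q$-series lemma so that the final matching of coefficients of $z^m$ becomes mechanical. The base case $m=0$ is immediate, since only the $p=i=0$ term survives and $\Delta_{h_0}=\mathrm{id}$, returning $\Theta_{e_k}\nabla E_{n-k,r}$.

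The main obstacle is the third ingredient, the commutator of $h_m^\perp$ with the conjugated multiplication operator $\mathbf{\Pi}\,e_k^{*}\,\mathbf{\Pi}^{-1}$. Unlike $\nabla$ and $\mathbf{\Pi}$, which are diagonal in the Macdonald basis and hence interact with the shift through transparent eigenvalue manipulations, multiplication by $e_k^{*}$ does not preserve that basis, so the commutator must be extracted by genuine plethystic calculus; it is this step that simultaneously drops the $\Theta$-index from $k$ to $k-i$ and manufactures the triangular $q$-powers $q^{\binom{i}{2}}$. Pinning this commutator down exactly, and reconciling its $q$-weights with those coming from the action on the $E$'s so that the $q$-Vandermonde collapse goes through, is the technical heart of the argument; once it is settled, the remaining assembly is routine bookkeeping of the $t$- and $q$-powers.
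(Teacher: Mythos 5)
First, a point of reference: the paper does not prove this statement at all --- it is imported verbatim as Corollary~9.2 of \cite{DAdderio-Romero-Theta-Identities-2020}, and the authors explicitly say the proof is due to D'Adderio and Romero (the identity was merely \emph{suggested} by their combinatorics). So there is no in-paper proof to compare against, and any argument you give has to stand entirely on its own.

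Judged on its own, your proposal is a strategy outline with the two hardest steps left open, and one of them is mischaracterised. You acknowledge that the commutator of $h_m^\perp$ with $\Theta_{e_k}=\mathbf{\Pi}\,e_k^{*}\,\mathbf{\Pi}^{-1}$ is ``the technical heart'' and that you have not pinned it down; that alone makes this a plan rather than a proof, since that step is precisely where the weights $q^{\binom{i}{2}}\qbinom{r-p+i}{i}_q$ and the drop $k\mapsto k-i$ must come from. But the second ingredient is also not as advertised: you claim the commutator of $h_m^\perp$ with $\nabla$ is ``controlled by pure eigenvalue bookkeeping'' because $\nabla$ is diagonal on the Macdonald basis. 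That reasoning fails, because $h_m^\perp$ is \emph{not} diagonal on that basis --- one has $h_m^\perp \H_\mu=\sum_{\nu}c_{\mu\nu}\H_\nu$ with $\nu$ ranging over partitions obtained by removing a horizontal-strip-like configuration, governed by the Macdonald Pieri coefficients. Commuting $h_m^\perp$ past $\nabla$ therefore requires controlling the ratios $e_{|\nu|}[B_\nu]/e_{|\mu|}[B_\mu]$ against these Pieri coefficients, and extracting the clean factor $t^{m-p}\Delta_{h_{m-p}}$ from that is a genuinely nontrivial summation (this is essentially where the published proof does its real work, via identities for Pieri coefficients and Theta operators). The generating-function packaging via $f\mapsto f[X+z]$ and the correctly checked $m=1$ instance are fine as far as they go, and the expectation that a $q$-Vandermonde collapse produces the final coefficients is plausible, but neither of the two commutation lemmas that everything hinges on is established, so the argument as written does not constitute a proof.
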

    
    We applied the change of variables $j \mapsto m, m \mapsto k, p \mapsto n-k-r, k \mapsto r, s \mapsto p, r \mapsto p-i$ in order to make it easier to interpret combinatorially and more consistent with the notation used in other papers.
    \section{Combinatorial definitions}
    \begin{definition}
        A \emph{Dyck path} of size $n$ is a lattice paths going from $(0,0)$ to $(n,n)$ consisting of east or north unit steps, always ending with an east step and staying above the line $x=y$, called the \emph{main diagonal}. The set of such paths is denoted by $\D(n)$. 
    \end{definition}

    \begin{definition}
        Let $\pi$ be a Dyck path of size $n$. We define its \emph{area word} to be the sequence of integers $a(\pi) = (a_1(\pi), a_2(\pi), \cdots, a_n(\pi))$ such that the $i$-th vertical step of the path starts from the diagonal $y=x+a_i(\pi)$. 
    \end{definition}
    For example the path in Figure~\ref{fig:labelled-dyck-path} has area word $01101211$.
    
    \begin{definition}
        A \emph{partial labelling} of a square path $\pi$ of size $n$ is an element $w \in \mathbb N^n$ such that
        \begin{itemize}
            \item if $a_i(\pi) > a_{i-1}(\pi)$, then $w_i > w_{i-1}$,
            \item $w_1 > 0$.
        \end{itemize}
        i.e. if we label the $i$-th vertical step of $\pi$ with $w_i$, then the labels appearing in each column of $\pi$ are strictly increasing from bottom to top, with the additional restriction that the first label cannot be a $0$.
        
        We omit the word \emph{partial} if the labelling is composed of strictly positive labels only.
    \end{definition}
    
     \begin{definition}
         A \emph{(partially) labelled Dyck path} is a pair $(\pi, w)$ where $\pi$ is a  Dyck path and $w$ is a (partial) labelling of $\pi$. We denote by $\LD(m,n)$ the set of labelled Dyck path of size $m+n$ with exactly $n$ positive labels, and thus exactly $m$ labels equal to $0$.
    \end{definition}

    
    Now we want to extend our sets introducing some decorations.
    
    \begin{definition}
        \label{def:valley}
        The \emph{contractible valleys} of a labelled square path $\pi$ are the indices $2 \leq i \leq n$ such that one either $a_i(\pi) < a_{i-1}(\pi)$, or $a_i(\pi) = a_{i-1}(\pi)$ and $w_i > w_{i-1}$.
        
        We define \[ v(\pi, w) \coloneqq \{1 \leq i \leq n \mid i \text{ is a contractible valley} \}, \] corresponding to the set of vertical steps that are directly preceded by a horizontal step and, if we were to remove that horizontal step and move it after the vertical step, we would still get a square path with a valid labelling.
    \end{definition}

    
    \begin{definition}
        A \emph{valley-decorated (partially) labelled Dyck path} is a triple $(\pi, w, dv)$ where $(\pi, w)$ is a (partially) labelled Dyck path and $dv \subseteq v(\pi, w)$.
    \end{definition}
    
    We denote by $\LD(m,n)^{\bullet k}$  the set of partially labelled valley-decorated Dyck paths of size $m+n$ with $n$ positive labels and $k$ decorated contractible valleys. 
    
    Finally, we sometimes omit writing $m$ or $k$ when they are equal to $0$. Notice that, because of the restrictions we have on the labelling and the decorations, the only path with $n=0$ is the empty path, for which also $m=0$ and $k=0$.
    
    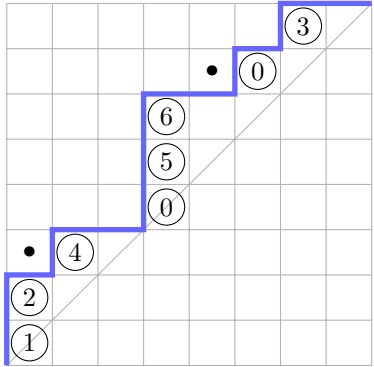
\begin{figure}[!ht]
        \centering
        \begin{tikzpicture}[scale=.6]
            \draw[step=1.0, gray!60, thin] (0,0) grid (8,8);
        
            \begin{scope}
                \clip (0,0) rectangle (8,8);
                \draw[gray!60, thin] (0,0) -- (8,8);
            \end{scope}
        
            \draw[blue!60, line width=2pt] (0,0) -- (0,1) -- (0,2) -- (1,2) -- (1,3) -- (2,3) -- (3,3) -- (3,4) -- (3,5) -- (3,6) -- (4,6) -- (5,6) -- (5,7) -- (6,7) -- (6,8) -- (7,8) -- (8,8);
        
            \draw (0.5,0.5) circle (0.4cm) node {$1$};
            \draw (0.5,1.5) circle (0.4cm) node {$2$};
            \draw (1.5,2.5) circle (0.4cm) node {$4$};
            \draw (3.5,3.5) circle (0.4cm) node {$0$};
            \draw (3.5,4.5) circle (0.4cm) node {$5$};
            \draw (3.5,5.5) circle (0.4cm) node {$6$};
            \draw (5.5,6.5) circle (0.4cm) node {$0$};
            \draw (6.5,7.5) circle (0.4cm) node {$3$};
            \draw (0.5,2.5) node {$\bullet$};
            \draw (4.5,6.5) node {$\bullet$};
        \end{tikzpicture}
        \caption{Example of an element in $\LD(2,6)^{\bullet 2}$.}
        \label{fig:labelled-dyck-path}
    \end{figure}

    \begin{definition}
        Let $w$ be a labelling of Dyck path of size $n$. We define $x^w \coloneqq \prod_{i=1}^{n} x_{w_i} \rvert_{x_0 = 1}$. For $P \coloneqq (\pi,w,dv)\in \LD(m,n)^{\bullet k}$ we define $x^P\coloneqq x^w$. 
    \end{definition}
    
    The fact that we set $x_0 = 1$ explains the use of the expression \emph{partially labelled}, as the labels equal to $0$ do not contribute to the monomial.
    
    \begin{definition}\label{def:touch}
        Let $P \coloneqq (\pi,w,dv) \in \LD(m,n)^{\bullet k}$. Define a \emph{touching point} of $P$ to be the a point on the main diagonal that is the starting point of a non-decorated vertical step of $P$, labelled with a positive label. The \emph{touch} of a path $P$, denoted $\touch(P)$, is the number of  touching points of $P$. 
    \end{definition}
    For example the path in Figure~\ref{fig:labelled-dyck-path} has touch $1$. 
                           
    We define two statistics on this set. 
    
    \begin{definition}\label{def:area}
        For $(\pi, w, dv) \in \LD(m,n)^{\bullet k}$ we define 
        \[ \area(\pi, w, dv) \coloneqq \sum_{i =1}^{m+n} a_i(\pi), \]
        i.e. the number of whole squares between the path and the main diagonal.
    \end{definition}
    
    For example, the path in Figure~\ref{fig:labelled-dyck-path} has area $7$. Notice that the area does not depend on the labelling or decorations.
    
    \begin{definition}\label{def:dinv}
        Let $(\pi, w, dv) \in \LD(m,n)^{\bullet k}$. For $1 \leq i < j \leq n$, the pair $(i,j)$ is a \emph{diagonal inversion} if
        \begin{itemize}
            \item either $a_i(\pi) = a_j(\pi)$ and $w_i < w_j$ (\emph{primary inversion}),
            \item or $a_i(\pi) = a_j(\pi) + 1$ and $w_i > w_j$ (\emph{secondary inversion}),
        \end{itemize}
        where $w_i$ denotes the $i$-th letter of $w$, i.e. the label of the vertical step in the $i$-th row. Then we define 
        \begin{align*}
            \dinv(\pi, w, dv) \coloneqq \# \{ 1 \leq i < j \leq n \mid (i,j) \text{ diagonal inversion } \land i \not \in dv \} - \# dv.
        \end{align*}
    \end{definition}
    
    For example, the path in Figure has $3$ primary inversions ($(2,3),(2,5)$ and $(2,8)$), $3$ secondary inversions ($(2,4),(6,7)$ and $(6,8)$) and $2$ decorated valleys. So its dinv equals $3+3-2=4$.
    
    It is easy to check that if $j \in dv$ then either there exists some diagonal inversion $(i,j)$ and so the dinv is always non-negative (see \cite{Iraci-VandenWyngaerd-2020}*{Proposition~1}).

    \section{The valley generalised Delta conjecture}
    Now we have all the tools to state the conjectural formulas that are the object of this paper. The following conjecture was first stated in \cite{Haglund-Remmel-Wilson-2018}.

    \begin{conjecture}[Delta conjecture, valley version]
        \label{conj: Delta conjecture, valley version}
        For $n,k\in \N$ with $k<n$
        \begin{align*}
            \Delta'_{e_{n-k-1}} e_n = \sum_{P\in \LD(n)^{\bullet k}}q^{\dinv(P)}t^{\area(P)}x^{P}.
        \end{align*}	
    \end{conjecture}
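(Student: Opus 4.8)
The plan is to reduce Conjecture~\ref{conj: Delta conjecture, valley version} to a finer, touch-graded identity and to attack that by induction, in the spirit of the proof of the rise version. First I would rewrite the left-hand side using Theorem~\ref{thm:theta-en} as $\Delta'_{e_{n-k-1}} e_n = \Theta_{e_k}\nabla e_{n-k}$, and then split $e_{n-k}$ along the $E$-basis of Definition~\ref{def:Enk} via $e_{n-k}=\sum_{r} E_{n-k,r}$, so that $\Theta_{e_k}\nabla e_{n-k}=\sum_{r=0}^{n-k}\Theta_{e_k}\nabla E_{n-k,r}$. On the combinatorial side I would refine the right-hand sum by $\touch$, grouping the paths $P\in\LD(n)^{\bullet k}$ by their number of touching points. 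This suggests the \emph{touch-refined} statement
\[ \Theta_{e_k}\nabla E_{n-k,r}=\sum_{\substack{P\in\LD(n)^{\bullet k}\\ \touch(P)=r}} q^{\dinv(P)}t^{\area(P)}x^{P}, \]
which, summed over $r$, recovers the conjecture; so it suffices to establish this refinement for all $n,k,r$.

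Second, I would prove the refinement by induction on $n$ (and $k$), with base case $k=0$: there $\Theta_{e_0}$ acts as the identity, and the statement becomes $\nabla E_{n,r}=\sum_{\touch=r} q^{\dinv}t^{\area}x^{P}$ over undecorated Dyck paths, which is the $E$-refined form of the shuffle theorem of \cite{Carlsson-Mellit-ShuffleConj-2018}. For the inductive step the symmetric function side already carries a clean recursion: Theorem~\ref{thm:sf-identity} expresses $h_m^\perp\Theta_{e_k}\nabla E_{n-k,r}$ as an explicit $q,t$-weighted sum of strictly smaller terms $\Delta_{h_{m-p}}\Theta_{e_{k-i}}\nabla E_{n-m-(k-i),\,r-p+i}$, the index $i$ recording how many of the $k$ decorations are stripped and the factors $t^{m-p}$, $q^{\binom{i}{2}}$ and the two $q$-binomials recording the recombination. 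The task is then to realise this algebraic recursion combinatorially, i.e.\ to produce a weight-preserving decomposition of the decorated labelled paths that peels off a controlled bottom portion and redistributes the decorated contractible valleys exactly as the coefficients prescribe.

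The hard part, and the reason the valley version remains open while the rise version does not, is controlling $\dinv$ under this peeling in the presence of \emph{valley} decorations. Unlike a decorated rise, a decorated contractible valley enters $\dinv$ globally: it both removes its row from the inversion count (the condition $i\notin dv$ in Definition~\ref{def:dinv}) and contributes the correction $-\# dv$, so it couples to primary and secondary inversions that may lie far from the part of the path being removed. Consequently the natural bottom-row decomposition does not localise, and no valley analogue of the Dyck-path-algebra / compositional recursion that settled the rise version in \cite{DAdderio-Mellit-Compositional-Delta-2020} is presently known. I expect that finding such a valley-compatible refinement — one under which the $\dinv$ statistic factors through the recursion of Theorem~\ref{thm:sf-identity} — is the genuine obstacle, and it is exactly this difficulty that the present paper sidesteps by proving only the implication between the plain and the generalised valley conjectures rather than the conjecture itself.
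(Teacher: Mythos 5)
This statement is labelled a \emph{conjecture} in the paper, and the paper offers no proof of it: the introduction states explicitly that the valley version of the Delta conjecture remains open, and the paper's actual contribution is only the implication from this conjecture (in its touch-refined form, Conjecture~\ref{conj:hypothesis}) to the generalised version (Conjecture~\ref{conj:thesis}). So there is no proof in the paper to compare yours against, and your text, to its credit, does not actually claim to close the argument either.

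That said, let me be concrete about where your proposed strategy breaks down, beyond the difficulty you already name. Your reduction to the touch-refined identity $\Theta_{e_k}\nabla E_{n-k,r}=\sum_{\touch(P)=r}q^{\dinv}t^{\area}x^P$ is fine and is exactly Conjecture~\ref{conj:hypothesis}. But the recursion you propose to drive the induction, Theorem~\ref{thm:sf-identity}, recurses in the parameter $m$ of $h_m^\perp$, i.e.\ it peels off \emph{maximal labels} and converts them into zero labels; at $m=0$ it degenerates to the tautology $\Theta_{e_k}\nabla E_{n-k,r}=\Theta_{e_k}\nabla E_{n-k,r}$ (only the $p=0$, $i=0$ term survives), so it gives no relation among different values of $n$, $k$, or $r$ for the $m=0$ statement you are trying to prove. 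The index $i$ in that identity strips decorations only from the steps being deleted off the diagonal during the pushing argument, not from a generic path, so it cannot serve as the $k$-decreasing step of an induction on $k$ starting from the shuffle-theorem base case. A genuinely new recursion --- a valley analogue of the compositional refinement used for the rise version --- would be needed, and none is available in this paper or, as far as is known, elsewhere. In short: the statement is open, your outline correctly identifies the obstacle, but the inductive skeleton you sketch cannot be assembled from the ingredients the paper provides.
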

    
    In \cite{Qiu-Wilson-2020}, the authors proposed the following formula, containing the previous one as a special case ($m=0$).
    \begin{conjecture}[Generalised Delta conjecture, valley version]
        \label{conj: Generalised Delta conjecture, valley version}
        For $m,n,k\in \N$ with $k<n$
        \begin{align*}
            \Delta_{h_m}\Delta'_{e_{n-k-1}} e_n = \sum_{P\in \LD(m,n)^{\bullet k}}q^{\dinv(P)}t^{\area(P)}x^{P}.
        \end{align*}	
    \end{conjecture}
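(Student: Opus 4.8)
The plan is to prove a touching-refined form of Conjecture~\ref{conj: Generalised Delta conjecture, valley version} and then deduce the statement by summing over the touch statistic. Write
\[ C_m(n,k,r) \coloneqq \sum_{\substack{P \in \LD(m,n)^{\bullet k}\\ \touch(P)=r}} q^{\dinv(P)} t^{\area(P)} x^{P}. \]
I would first reduce to the refined setting: by \cite{Iraci-VandenWyngaerd-2020}, Conjecture~\ref{conj: Delta conjecture, valley version} is equivalent to the refined identity $\Theta_{e_k}\nabla E_{n-k,r} = C_0(n,k,r)$, which I take as my hypothesis. Since $\sum_r E_{n-k,r}=e_{n-k}$ and $\Theta_{e_k}\nabla e_{n-k}=\Delta'_{e_{n-k-1}}e_n$ by Theorem~\ref{thm:theta-en}, applying $\Delta_{h_m}$ shows that it suffices to prove the refined generalised identity $\Delta_{h_m}\Theta_{e_k}\nabla E_{n-k,r}=C_m(n,k,r)$ for all $m,n,k,r$; summing over $r$ then recovers Conjecture~\ref{conj: Generalised Delta conjecture, valley version}.

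The engine is Theorem~\ref{thm:sf-identity}. Matching indices, the $(p,i)$ summand of its right-hand side is $\Delta_{h_{m-p}}\Theta_{e_{k-i}}\nabla E_{(n-m)-(k-i),\,r-p+i}$, i.e.\ the conjectural refined generating function for the parameters $(m-p,\,n-m,\,k-i,\,r-p+i)$. My next goal is to prove the purely combinatorial mirror identity
\[ h_m^\perp\, C_0(n,k,r) = \sum_{p=0}^{m} t^{m-p}\sum_{i=0}^{p} q^{\binom{i}{2}} \qbinom{r-p+i}{i}_q \qbinom{r}{p-i}_q\, C_{m-p}(n-m,\,k-i,\,r-p+i), \]
as an identity of polynomials in $q,t$ and the $x$-variables, with no conjecture assumed.

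To establish this combinatorial identity I would interpret the skewing operator through the addition-of-a-variable formula $F[X+z]=\sum_{j\ge 0} z^{j}\,(h_j^\perp F)[X]$: extracting the coefficient of $z^m$ shows that $h_m^\perp C_0(n,k,r)$ is the generating function of paths in $\LD(n)^{\bullet k}$ of touch $r$ in which exactly $m$ labels are \emph{marked} (assigned the new variable and hence dropped from the monomial), the remaining $n-m$ labels still contributing to $x^{P}$. The pushing algorithm of \cite{DAdderio-Iraci-VandenWyngaerd-Theta-2020}, in the decorated form of \cite{Iraci-VandenWyngaerd-2020}, reorganises each marked path by pushing every marked step towards the main diagonal and reinterpreting it as a zero label, producing a genuine element of some $\LD(m-p,\,n-m)^{\bullet(k-i)}$. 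Here $p$ of the marked steps get contracted at the diagonal, lowering the size by $p$ and accounting for the factor $t^{m-p}$ and one of the Gaussian binomials, while $i$ of them are absorbed as decorated contractible valleys, lowering $k$ by $i$, raising the touch, and producing $q^{\binom{i}{2}}$ together with the second Gaussian binomial from the resulting change in $\dinv$.

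The main obstacle is exactly this last verification: showing that the pushing algorithm is weight-preserving and reproduces the precise monomials and Gaussian binomials $\qbinom{r-p+i}{i}_q$ and $\qbinom{r}{p-i}_q$ of Theorem~\ref{thm:sf-identity}, by controlling how the pushed zero labels interact with the existing decorated contractible valleys and with the touching points. Granting the combinatorial identity, I would finish by induction on $m$. The base case $m=0$ is the hypothesis. For the inductive step, the hypothesis $\Theta_{e_k}\nabla E_{n-k,r}=C_0(n,k,r)$ makes the left-hand sides of Theorem~\ref{thm:sf-identity} and of the combinatorial identity equal, hence so are their right-hand sides; every summand with $p\ge 1$ has $m-p<m$ and, by the inductive hypothesis, equals the matching symmetric function, so these cancel and only the $p=0$ term survives, giving $C_m(n-m,k,r)=\Delta_{h_m}\Theta_{e_k}\nabla E_{(n-m)-k,r}$. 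As $n$ is arbitrary, this is the refined generalised identity for every number of positive labels, and summing over $r$ as in the first step yields Conjecture~\ref{conj: Generalised Delta conjecture, valley version}.
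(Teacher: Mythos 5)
Your proposal follows essentially the same route as the paper: take the touching refinement of the valley Delta conjecture as the hypothesis, prove the purely combinatorial mirror of Theorem~\ref{thm:sf-identity} via the pushing algorithm (this is exactly the paper's Theorem~\ref{thm:pushing-theorem}, with $h_m^\perp$ interpreted as selecting the paths with exactly $m$ maximal labels), and then induct on $m$ so that the inductive hypothesis cancels every $p\geq 1$ term and only the $p=0$ term survives. Two small caveats: the paper assumes the touching refinement (Conjecture~\ref{conj:hypothesis}) directly rather than deriving it from the unrefined Conjecture~\ref{conj: Delta conjecture, valley version} as you suggest, and the dinv bookkeeping you flag as ``the main obstacle'' is precisely the interlacing argument the paper carries out, counting how the deleted diagonal steps of types (a) and (b) interlace with the $r-p+i$ surviving touching points to produce $q^{\binom{i}{2}}\qbinom{r-p+i}{i}_q\qbinom{r}{p-i}_q$.
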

    
    Recall that, using Theorem~\ref{thm:theta-en}, the symmetric function can be reformulated using the Theta operators as follows: 
    \begin{align*}
        \Delta_{h_m}\Delta'_{e_{n-k-1}} e_n = \Delta_{h_m}\Theta_{e_k}\nabla e_{n-k}. 
    \end{align*}
    We have the following refinements, first stated in \cite{Iraci-VandenWyngaerd-2020}. 
    \begin{conjecture}[Touching Delta conjecture, valley version]\label{conj:hypothesis}
        For $n,k,r\in \N$ with $k<n$
        \begin{align*}
            \Theta_{e_k} \nabla E_{n-k,r} =  \sum_{\substack{P\in \LD(n)^{\bullet k} \\ \touch(P)=r}}q^{\dinv(P)}t^{\area(P)}x^{P}.
        \end{align*}
    \end{conjecture}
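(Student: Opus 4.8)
The plan is to prove the identity by induction, matching a recursion on the combinatorial side to one on the symmetric-function side. The natural algebraic recursion is furnished by Theorem~\ref{thm:sf-identity}: specialising it at $m=1$ (and using $\Delta_{h_0}=\mathrm{id}$ and $q^{\binom{1}{2}}=1$) collapses the double sum to $h_1^\perp \Theta_{e_k}\nabla E_{n-k,r} = t\,\Delta_{h_1}\Theta_{e_k}\nabla E_{n-1-k,r} + [r]_q\,\Theta_{e_k}\nabla E_{n-1-k,r-1} + [r]_q\,\Theta_{e_{k-1}}\nabla E_{n-k,r}$. First I would give a combinatorial reading of $h_1^\perp$ on the generating function $\sum_P q^{\dinv(P)}t^{\area(P)}x^P$: since $h_1^\perp$ is the down operator dual to multiplication by $e_1=h_1$, it acts on $x^P=\prod_i x_{w_i}$ by deleting one positive label, i.e.\ by summing over all ways to erase one positively labelled vertical step and re-thread the remaining steps into a valid Dyck path. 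The crux of this first step is to control exactly how $\dinv$, $\area$, and $\touch$ transform under such a deletion.

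Second, I would match the three families of deleted paths to the three algebraic summands. The term $[r]_q\,\Theta_{e_{k-1}}\nabla E_{n-k,r}$ should enumerate deletions that remove the label sitting in a decorated valley, lowering $k$ by one while leaving the touch unchanged; the term $[r]_q\,\Theta_{e_k}\nabla E_{n-1-k,r-1}$ should enumerate deletions that merge two diagonal touching points, lowering the touch by one; and the factor $[r]_q$ in both should emerge as the $q$-count of the $r$ admissible diagonal positions into which the erased row can be reinserted. The base cases $k=0$, where the statement reduces to the touch-refined shuffle theorem — a known consequence of the compositional shuffle theorem of Carlsson and Mellit — and small $n$ would anchor the induction.

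The decisive obstacle — and the reason this statement is recorded here as a conjecture rather than proved — is the remaining summand $t\,\Delta_{h_1}\Theta_{e_k}\nabla E_{n-1-k,r}$: the recursion does not close inside the pure-valley family $\{\Theta_{e_k}\nabla E_{n-k,r}\}$ but leaks into the generalised family $\{\Delta_{h_m}\Theta_{e_k}\nabla E_{n-k,r}\}$ carrying an extra $\Delta_{h_m}$. Consequently Theorem~\ref{thm:sf-identity} is naturally read in the direction ``plain $\Rightarrow$ generalised'' — the use exploited in the remainder of this paper — and on its own it yields no self-contained recursion for the plain touching Delta. Closing the gap would require a genuinely new valley-side input: either a compositional recursion for the valley statistics analogous to the rise-side recursion behind the proof of the compositional rise Delta conjecture, or an explicit bijection preserving $(\dinv,\area,\touch,x^P)$ from the touch-$r$ valley-decorated paths in $\LD(n)^{\bullet k}$ to the corresponding rise-decorated paths, which would reduce the claim to the already proved rise version. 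Producing either object for $k\geq 1$ is precisely the open core of the valley Delta conjecture, so I expect this to be the true difficulty.
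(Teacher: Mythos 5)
There is no proof of this statement in the paper to compare against: it is Conjecture~\ref{conj:hypothesis}, the (touching refinement of the) valley Delta conjecture, which the paper explicitly records as open and takes only as a \emph{hypothesis} — the paper's main theorem is precisely the implication from this conjecture to its generalised counterpart. Your proposal correctly recognises this, and your supporting analysis is accurate: the $m=1$ specialisation of Theorem~\ref{thm:sf-identity} does collapse to
\[
h_1^\perp \Theta_{e_k}\nabla E_{n-k,r} \;=\; t\,\Delta_{h_1}\Theta_{e_k}\nabla E_{n-1-k,r} \;+\; [r]_q\,\Theta_{e_k}\nabla E_{n-1-k,r-1} \;+\; [r]_q\,\Theta_{e_{k-1}}\nabla E_{n-k,r},
\]
and the obstruction you name — that the recursion does not close inside the family $\{\Theta_{e_k}\nabla E_{n-k,r}\}$ but leaks into the $\Delta_{h_m}$-deformed family — is exactly why the identity can only be exploited in the direction ``plain $\Rightarrow$ generalised'' (by induction on $m$), which is what the paper does. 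Your reading of the three summands also matches the paper's pushing algorithm: the two $[r]_q$ terms correspond to deleting a maximal-labelled diagonal step that is undecorated (touch drops by one) or a decorated valley (decoration count drops by one, touch unchanged). In short, declining to produce a proof and identifying the missing ingredient (a genuinely new valley-side recursion or a statistics-preserving bijection to the rise side) is the correct conclusion here.
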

    \begin{conjecture}[Touching generalised Delta conjecture, valley version]\label{conj:thesis}
        For $m,n,k,r\in \N$ with $k<n$
        \begin{align*}
            \Delta_{h_m}\Theta_{e_k} \nabla E_{n-k,r} =  \sum_{\substack{P\in \LD(m,n)^{\bullet k} \\ \touch(P)=r}}q^{\dinv(P)}t^{\area(P)}x^{P}.
        \end{align*}
    \end{conjecture}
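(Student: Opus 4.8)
The plan is to prove Conjecture~\ref{conj:thesis} by induction on $m$, with Conjecture~\ref{conj:hypothesis} serving as the base case $m=0$ and the symmetric function identity of Theorem~\ref{thm:sf-identity} as the engine of the inductive step. To fix notation, write $F_{m,n,k,r}$ for the combinatorial generating function on the right-hand side of Conjecture~\ref{conj:thesis} and $G_{m,n,k,r} \coloneqq \Delta_{h_m}\Theta_{e_k}\nabla E_{n-k,r}$ for the symmetric function on the left; the goal is $F_{m,n,k,r}=G_{m,n,k,r}$ for all admissible parameters, and Conjecture~\ref{conj:hypothesis} is exactly the assertion $F_{0,n,k,r}=G_{0,n,k,r}$. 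Note that both $F$ and $G$ are homogeneous of degree $n$, while $h_m^\perp$ lowers degree by $m$, so the identity of Theorem~\ref{thm:sf-identity} lives in degree $n-m$ and governs only the \emph{lowering} side; the inductive extraction below is what transfers information back up to $G_{m,n,k,r}$.

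The heart of the argument is a combinatorial \emph{pushing lemma} computing $h_m^\perp$ of the fully labelled generating function. First I would use the standard fact $h_m^\perp H[X]=[z^m]\,H[X+z]$ to realize $h_m^\perp F_{0,n,k,r}$ as a sum over labelled paths of size $n$ in which $m$ of the labels carry a single fresh letter $z$ (necessarily at most one per column), which, placed below all positive labels, plays the role of a $0$-label. I would then run the pushing algorithm of \cite{Iraci-VandenWyngaerd-2020} on these $m$ distinguished steps: each $0$-labelled step is either retained (contributing to the $m-p$ zeros of a genuine partially labelled path) or is \emph{pushed out} of the path, deleting a row and lowering the size; this is what accounts for the passage from the size-$n$ object to the size-$(n-m)$ objects appearing on the right. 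Tracking the induced changes in $\area$, $\dinv$, $\touch$ and in the decorated valleys, the claim to establish is
\[ h_m^\perp F_{0,n,k,r} = \sum_{p=0}^m t^{m-p} \sum_{i=0}^p q^{\binom{i}{2}} \qbinom{r-p+i}{i}_q \qbinom{r}{p-i}_q \, F_{m-p,\,n-m,\,k-i,\,r-p+i}, \]
namely the very recursion of Theorem~\ref{thm:sf-identity} but with $F$ in place of $G$. Here $p$ is the number of rows pushed out and $i$ the number of decorated valleys absorbed (explaining the indices $m-p$ and $k-i$), the shift $r-p+i$ records how deletions lower and absorptions raise the touch, and the factors $q^{\binom{i}{2}}$ and the two $q$-binomials count the available insertion positions together with the resulting secondary inversions --- precisely the combinatorics that originally \emph{suggested} the identity.

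With the pushing lemma in hand the induction closes immediately. Applying $h_m^\perp$ to the base identity $F_{0,n,k,r}=G_{0,n,k,r}$, then expanding the left side by the pushing lemma and the right side by Theorem~\ref{thm:sf-identity}, and subtracting, yields $\sum_{p,i} c_{p,i}\bigl(F_{m-p,n-m,k-i,r-p+i}-G_{m-p,n-m,k-i,r-p+i}\bigr)=0$ with $c_{p,i}=t^{m-p}q^{\binom{i}{2}}\qbinom{r-p+i}{i}_q\qbinom{r}{p-i}_q$. Every term with $p\geq 1$ has first index $m-p<m$, so by the inductive hypothesis its $F$ and $G$ contributions agree and cancel. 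The unique surviving term is $p=0$ (which forces $i=0$), leaving
\[ t^m\bigl(F_{m,n-m,k,r} - G_{m,n-m,k,r}\bigr) = 0. \]
Since $t^m\neq 0$ and $n\geq m$, $k$, $r$ are arbitrary, this gives $F_{m,N,k,r}=G_{m,N,k,r}$ for every size $N$, completing the induction and hence the proof that Conjecture~\ref{conj:hypothesis} implies Conjecture~\ref{conj:thesis}.

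The genuinely hard step is the pushing lemma; the symmetric function identity of Theorem~\ref{thm:sf-identity} may be invoked as a black box, so all the difficulty is combinatorial. One must define the pushing procedure precisely, verify that retaining a distinguished step as a $0$-label or pushing it out always produces a valid element of some $\LD(m-p,n-m)^{\bullet (k-i)}$, and check that summing over all choices reproduces the exact weights $t^{m-p}q^{\binom{i}{2}}\qbinom{r-p+i}{i}_q\qbinom{r}{p-i}_q$. I expect the delicate points to be the bookkeeping of $\dinv$, since deleting and reinserting rows alters both primary and secondary inversions and interacts with the $-\#dv$ correction, the behaviour of $\touch$ under deletion versus absorption of decorated valleys, and the compatibility of the passage $h_m^\perp H[X]=[z^m]H[X+z]$ with the ordering conventions that a $0$-label is smaller than every positive label yet may not occupy the first row.
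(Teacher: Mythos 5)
Your proposal follows essentially the same route as the paper: the same combinatorial ``pushing lemma'' expressing $h_m^\perp$ of the $m=0$ combinatorial sum via the recursion of Theorem~\ref{thm:sf-identity} with the path generating functions in place of the symmetric functions (this is exactly the paper's Theorem~\ref{thm:pushing-theorem}), followed by the identical induction on $m$ in which all terms with $p\geq 1$ cancel and the $p=0$ term is extracted. The only divergence is cosmetic --- the paper realises $h_m^\perp$ by marking the $m$ maximal labels and setting their variable to $1$ (so the pushed steps sit at the tops of their columns and become $0$-labels), whereas you adjoin a minimal letter $z$; both are valid and the structure of the argument is unchanged.
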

    
    The goal of this paper is to prove that Conjecture~\ref{conj:hypothesis} implies Conjecture~\ref{conj:thesis}.
    
    As a corollary, we obtain an analogous result for the corresponding square conjecture. We refer to \cite{Iraci-VandenWyngaerd-2020} for the missing definitions. In that paper, we introduced the aforementioned square analogue of the valley version of the Delta conjecture, and we showed that it is implied by Conjecture~\ref{conj:thesis}
    
    \begin{conjecture}[Modified Delta square conjecture, valley version]
        \label{conj:valley-square-2}
        \[ \Theta_{e_k} \nabla \omega(p_{n-k}) = \sum_{P \in \LSQ'(n)^{\bullet k}} q^{\dinv(P)} t^{\area(P)} x^P. \]
    \end{conjecture}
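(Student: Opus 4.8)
The plan is to prove Conjecture~\ref{conj:valley-square-2} \emph{conditionally}: as it is an open statement, what one actually establishes is that it is implied by the valley version of the Delta conjecture. I would deduce it by chaining two implications. The first is the main theorem of this paper, that Conjecture~\ref{conj:hypothesis} implies Conjecture~\ref{conj:thesis}; recall that Conjecture~\ref{conj:hypothesis} is merely the touching refinement of Conjecture~\ref{conj: Delta conjecture, valley version}, so this is exactly the advertised hypothesis. The second is the implication Conjecture~\ref{conj:thesis} $\Rightarrow$ Conjecture~\ref{conj:valley-square-2}, already established in \cite{Iraci-VandenWyngaerd-2020} by a \shift-type construction that assembles the elements of $\LSQ'(n)^{\bullet k}$ out of Dyck paths organised by their touching data. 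This second step I would simply cite; it also explains why the touching refinements, rather than the plain conjectures, are the objects that must be carried through the chain.

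The content therefore lies entirely in the first implication, which I would prove by induction on $m$, the number of zero labels. Write $A_{m,n,k,r} \coloneqq \Delta_{h_m}\Theta_{e_k}\nabla E_{n-k,r}$ for the symmetric function and $B_{m,n,k,r} \coloneqq \sum_{P\in\LD(m,n)^{\bullet k},\,\touch(P)=r}q^{\dinv(P)}t^{\area(P)}x^P$ for the combinatorial side; the goal is $A=B$. The base case $m=0$ is precisely Conjecture~\ref{conj:hypothesis}. For the inductive step the engine is Theorem~\ref{thm:sf-identity}: read with $n$ replaced by $n+m$, its left-hand side is $h_m^\perp\Theta_{e_k}\nabla E_{n+m-k,r}=h_m^\perp A_{0,n+m,k,r}$, the summand indexed by $p=i=0$ is exactly $t^m A_{m,n,k,r}$, and every other summand is a multiple of some $A_{m-p,\dots}$ with $m-p<m$. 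Thus $A$ obeys a recursion expressing $A_{m,n,k,r}$ through $h_m^\perp A_{0,\cdot}$ (known by the hypothesis, valid at all parameters) and through strictly smaller cases (known by induction). The role of the pushing algorithm is to prove that $B$ satisfies the \emph{same} recursion combinatorially: applying $h_m^\perp$ to the fully labelled generating function $B_{0,n+m,k,r}$ corresponds to turning $m$ of the positive labels into zero labels, and pushing then relocates the resulting zero-labelled rows while tracking how $\dinv$, $\area$ and $\touch$ evolve. Matching the two recursions across the shared base case yields $A=B$.

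The step I expect to be the main obstacle is showing that the pushing algorithm reproduces Theorem~\ref{thm:sf-identity} \emph{coefficient by coefficient}. Concretely, I would verify that the ways of interleaving the relocated zeros among the existing touching points and the surviving columns are enumerated exactly by the two factors $\qbinom{r-p+i}{i}_q$ and $\qbinom{r}{p-i}_q$, that the secondary inversions they create account for $q^{\binom{i}{2}}$, and that the rows left resting on the diagonal contribute the $t^{m-p}$. The delicate bookkeeping is how a single push simultaneously shifts the decoration count $k$, the touching number $r$, and the index of $E_{n,r}$; getting these to line up with the indices $\Theta_{e_{k-i}}\nabla E_{n-m-(k-i),\,r-p+i}$ appearing in the identity, uniformly in all four parameters, is where the argument is most likely to demand care. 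Once this combinatorial recursion is in hand, the induction closes, the first implication follows, and the conditional statement is proved.
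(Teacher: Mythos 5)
Your proposal matches the paper's treatment: the statement is an open conjecture, and what the paper actually establishes is exactly the conditional chain you describe — Conjecture~\ref{conj:hypothesis} implies Conjecture~\ref{conj:thesis} (proved by induction on $m$ via Theorem~\ref{thm:sf-identity}, with the pushing algorithm supplying the matching combinatorial recursion and the $p=0$ term isolating $t^m\Delta_{h_m}\Theta_{e_k}\nabla E_{n-k,r}$), and Conjecture~\ref{conj:thesis} implies Conjecture~\ref{conj:valley-square-2} by citation to \cite{Iraci-VandenWyngaerd-2020}. This is essentially the same approach as the paper's, including the bookkeeping of the factors $t^{m-p}$, $q^{\binom{i}{2}}$, $\qbinom{r-p+i}{i}_q$ and $\qbinom{r}{p-i}_q$ that you flag as the delicate step.
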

    
    This conjecture extends nicely to the $m>0$ case, and we also showed that the generalised valley Delta conjecture implies the corresponding square analogue, that is, Conjecture~\ref{conj:thesis} implies Conjecture~\ref{conj:gen-valley-square-2}.
    
    \begin{conjecture}[Modified generalised Delta square conjecture, valley version]
        \label{conj:gen-valley-square-2}
        \[ \Delta_{h_m} \Theta_{e_k} \nabla \omega(p_{n-k}) = \sum_{P \in \LSQ'(m,n)^{\bullet k}} q^{\dinv(P)} t^{\area(P)} x^P. \]
    \end{conjecture}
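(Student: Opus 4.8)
Since Conjecture~\ref{conj:gen-valley-square-2} concerns square paths, the plan is not to attack it head on but to deduce it conditionally, by concatenating the implication proved in the present paper with the square reduction of \cite{Iraci-VandenWyngaerd-2020}. Explicitly, I would factor the argument as
\[
\text{Conjecture~\ref{conj:hypothesis}} \Longrightarrow \text{Conjecture~\ref{conj:thesis}} \Longrightarrow \text{Conjecture~\ref{conj:gen-valley-square-2}},
\]
where the first arrow is the main theorem of this paper (the pushing algorithm together with the symmetric function identity of Theorem~\ref{thm:sf-identity}), and the second arrow is the passage from the Dyck-path version to the square version established in \cite{Iraci-VandenWyngaerd-2020}. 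Concatenating them shows that the touching refinement of the valley Delta conjecture, Conjecture~\ref{conj:hypothesis}, forces Conjecture~\ref{conj:gen-valley-square-2}; the special case $m=0$ simultaneously yields Conjecture~\ref{conj:valley-square-2}.

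The only genuinely new bookkeeping sits in the square arrow, which I would run in parallel on the two sides. On the symmetric function side the input is the expansion
\[
\omega(p_{n-k}) = \sum_{r=1}^{n-k} \frac{[n-k]_q}{[r]_q}\, E_{n-k,r},
\]
which I would confirm directly, as it is routine to verify on power sums using Definition~\ref{def:Enk} and the substitution $p_j[X\tfrac{1-z}{1-q}] = p_j\tfrac{1-z^j}{1-q^j}$; applying the linear operators $\Delta_{h_m}$, $\Theta_{e_k}$ and $\nabla$ then gives
\[
\Delta_{h_m}\Theta_{e_k}\nabla\omega(p_{n-k}) = \sum_{r=1}^{n-k} \frac{[n-k]_q}{[r]_q}\, \Delta_{h_m}\Theta_{e_k}\nabla E_{n-k,r}.
\]
On the combinatorial side the companion statement is a statistic-preserving decomposition of $\LSQ'(m,n)^{\bullet k}$: sweeping a partially labelled, valley-decorated square path down onto the main diagonal returns a path of $\LD(m,n)^{\bullet k}$ with some touch value $r$, preserving the monomial $x^P$ and controlling $\area$ and $\dinv$, so that the net effect of the admissible shifts is the weight $\tfrac{[n-k]_q}{[r]_q}$ attached to touch value $r$. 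Granting Conjecture~\ref{conj:thesis}, each $\Delta_{h_m}\Theta_{e_k}\nabla E_{n-k,r}$ equals the $(q,t)$-generating function of the paths in $\LD(m,n)^{\bullet k}$ with $\touch=r$, and weighting by $\tfrac{[n-k]_q}{[r]_q}$ and summing recovers the right-hand side of Conjecture~\ref{conj:gen-valley-square-2}.

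I expect essentially all of the difficulty to be inherited from the first arrow, i.e.\ the main theorem of this paper, where the pushing algorithm must be shown to transport $\area$, $\dinv$ and the valley decorations coherently while reproducing the coefficients $q^{\binom{i}{2}}\qbinom{r-p+i}{i}_q\qbinom{r}{p-i}_q$ of Theorem~\ref{thm:sf-identity}. For the square arrow the one point needing care is that the reduction of \cite{Iraci-VandenWyngaerd-2020} be invoked at the refined level, term by term in $r$: the individual weights $\tfrac{[n-k]_q}{[r]_q}$ are not polynomials, so it must be the displayed identity---rather than any naive fibrewise count---that guarantees their assembly into the manifestly polynomial square generating function. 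Once both arrows are in place the corollary is immediate, and I would close by recording that Conjecture~\ref{conj:hypothesis} thereby implies all of Conjecture~\ref{conj:thesis}, Conjecture~\ref{conj:valley-square-2} and Conjecture~\ref{conj:gen-valley-square-2}, as announced in the introduction.
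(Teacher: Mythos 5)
Your proposal takes essentially the same route as the paper: since the statement is a conjecture, the paper likewise establishes it only conditionally, via the chain Conjecture~\ref{conj:hypothesis} $\Rightarrow$ Conjecture~\ref{conj:thesis} (the main theorem of the paper, proved with the pushing algorithm and Theorem~\ref{thm:sf-identity}) $\Rightarrow$ Conjecture~\ref{conj:gen-valley-square-2} (the square reduction, which the paper does not reprove but cites from \cite{Iraci-VandenWyngaerd-2020}). Your sketch of the second arrow --- expanding $\omega(p_{n-k})$ as $\sum_{r} \frac{[n-k]_q}{[r]_q} E_{n-k,r}$ and matching it against the touch decomposition of the square-path side --- is precisely the content of that cited result, so your argument is correct and structurally identical to the paper's.
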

    
    Combining these results with the main result of this paper, we get that, if Conjecture~\ref{conj:hypothesis} holds, then Conjecture~\ref{conj:thesis}, Conjecture~\ref{conj:valley-square-2}, and Conjecture~\ref{conj:gen-valley-square-2} all hold.
    
    \section{The proof}
        Our proof comprises two steps: first, we will interpret \ref{thm:sf-identity} combinatorially. Then we will apply an induction argument on the $m$ variable of the same equation to conclude. 

        The left hand side of \ref{thm:sf-identity},  $h_m^\perp \Theta_{e_k}\nabla E_{n-k,r}$, coincides with $h_m^\perp$ applied to the left hand side of \ref{conj:hypothesis}.  Applying $h_m^\perp$ to the right hand side of \ref{conj:hypothesis} has the effect\footnotemark of selecting all the paths that have exactly $m$ maximal labels, and setting the variable of that label equal to $1$. In other words, if for a path $P$ of labelling $w$ we define $\max(P)\coloneqq \max(w)$, \ref{conj:hypothesis} implies 
        \begin{align}\label{eq:hperp-combinatorics }
        h_m^\perp \Theta_{e_k} \nabla E_{n-k,r} =\sum_{\substack{P\in \LD(n)^{\bullet k}\\ \touch(P)=r\\ P \text{ has } m \text{ maximal labels}}} q^{\dinv(P)}t^\area{(P)}x^P\rvert_{x_{\max(P)} = 1}.
        \end{align}
        \footnotetext{Indeed, by definition $\< h_m^\perp \Theta_{e_k} \nabla E_{n-k,r}, h_\mu \> = \< \Theta_{e_k} \nabla E_{n-k,r}, h_m h_\mu \>$ and the homogeneous basis is dual to the monomial basis with respect to the Hall scalar product.}
        Thus the combinatorial counterpart of \ref{thm:sf-identity} is the following. 
        \begin{theorem} For all $m,n,r,k\in \N$ we have
        \label{thm:pushing-theorem}
        \begin{align*}\sum_{\substack{P\in \LD(n)^{\bullet k}\\ \touch(P)=r\\ P \text{ has } m \text{ maximal labels}}}& q^{\dinv(P)}t^\area{(P)}x^P\rvert_{x_{\max(P)} = 1}
        \\&= 
        \sum_{p=0}^m t^{m-p} 
        \sum_{i=0}^p q^{\binom{i}{2}} \qbinom{r-p+i}{i}_q \qbinom{r}{p-i}_q 
        \sum_{\substack{P \in \LD(m-p, n-m)^{\bullet k-i} \\ \touch(P)=r-p+i }} q^{\dinv(P)} t^{\area(P)} x^P 
        \end{align*}
        \end{theorem}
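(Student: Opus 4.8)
The plan is to prove this identity combinatorially, by exhibiting the \emph{pushing algorithm} as a weight-preserving map from the paths indexing the left-hand side onto the paths indexing the right-hand side, in which the Gaussian binomials arise as the generating functions of the fibers. Fix a path $P \in \LD(n)^{\bullet k}$ with $\touch(P) = r$ and exactly $m$ maximal labels, all equal to $\max(P)$. Since rows in a common column carry strictly increasing labels, the $m$ maximal labels occupy $m$ distinct columns, each at the top of its column, and they fall into three mutually exclusive classes according to their position: those sitting at a decorated contractible valley, those starting a vertical step on the main diagonal and carrying no decoration (hence \emph{touching} maximal labels in the sense of $\touch$), and all the others. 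Writing $i$ for the number of the first kind and $p-i$ for the number of the second kind, this partition reads off the indices $p$ and $i$ from $P$; the remaining $m-p$ maximal labels are the ones that will be demoted.

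First I would define the forward map $\Phi$. It demotes the $m-p$ maximal labels of the third class to the label $0$, and it deletes the $p$ maximal labels of the first two classes together with the unique horizontal step accompanying each of them, \emph{pushing} the surviving steps down so that the result is again a Dyck path. Since $x_{\max(P)}=1$ on the left and $x_0=1$ on the right, the monomial is preserved, $x^P\rvert_{x_{\max(P)}=1}=x^{\Phi(P)}$; the $n-m$ non-maximal positive labels are untouched, so $\Phi(P)$ has exactly $n-m$ positive labels and $m-p$ zero labels. Deleting the $i$ decorated-valley steps removes exactly $i$ decorations, so that $\Phi(P)\in\LD(m-p,n-m)^{\bullet k-i}$, while deleting the $p-i$ touching steps removes exactly $p-i$ touching points, giving $\touch(\Phi(P))=r-(p-i)=r-p+i$. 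The next step is to check that the area splits as $\area(P)=(m-p)+\area(\Phi(P))$, the summand $m-p$ being contributed by the demotion and pushing of the third-class labels; this accounts for the factor $t^{m-p}$.

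To recover the $q$-factors I would analyse the fibers of $\Phi$. For a fixed $P'\in\LD(m-p,n-m)^{\bullet k-i}$ with $\touch(P')=r-p+i$, reconstructing a preimage amounts to choosing how to reinsert the $p-i$ touching maximal steps and the $i$ decorated-valley maximal steps among the $r-p+i$ touching points of $P'$. Interleaving the $p-i$ new touching steps among the existing ones contributes $\qbinom{r}{p-i}_q$ once weighted by the diagonal inversions they create, placing the $i$ decorated valleys contributes $\qbinom{r-p+i}{i}_q$, and the mutual secondary inversions among the $i$ equal maximal valley labels contribute the extra $q^{\binom{i}{2}}$. Summing $q^{\dinv}$ over a single fiber then yields $q^{\binom{i}{2}}\qbinom{r-p+i}{i}_q\qbinom{r}{p-i}_q\,q^{\dinv(P')}$, and since the area and the monomial are constant along the fiber, collecting the contributions over all admissible $p$ and $i$ reproduces the right-hand side exactly (the parameter triple $(m-p,k-i,r-p+i)$ recovers $p$ and $i$, so there is no overcounting between distinct terms).

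The main obstacle is the precise statistic bookkeeping under the pushing moves: one must verify that each elementary push changes $\dinv$ and $\area$ by the predicted amounts, that these changes are independent of the order in which the maximal labels are processed (so that $\Phi$ is well defined and $\area$ is genuinely constant along each fiber), and that the reinsertion data parametrises the fiber bijectively, so that the fiber generating function is the stated product of Gaussian binomials and not merely some polynomial agreeing with it at $q=1$. Establishing that the $q$-weight of the interleavings equals \emph{exactly} $\qbinom{r}{p-i}_q\qbinom{r-p+i}{i}_q$ — that is, that the diagonal inversions created by the pushed maximal steps track the standard $q$-counting of lattice-path insertions — is where essentially all the work lies; the extension of the algorithm to decorated valleys carried out in \cite{Iraci-VandenWyngaerd-2020} supplies the template for handling the valley class and the $q^{\binom{i}{2}}$ term.
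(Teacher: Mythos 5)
Your overall strategy is exactly the paper's: the pushing algorithm, with maximal-labelled steps on the main diagonal deleted, the remaining maximal steps demoted to $0$ and pushed past their horizontal step, and the Gaussian binomials accounting for the diagonal inversions carried by the deleted steps. However, two points in your accounting are genuinely off. First, your three classes are not the right ones: you take the first class to be \emph{all} maximal labels sitting at decorated contractible valleys, but a decorated maximal valley need not start on the main diagonal, and such a step must be \emph{pushed} (retaining its decoration and becoming a $0$ label), not deleted --- otherwise the image fails to land in $\LD(m-p,n-m)^{\bullet k-i}$ with your value of $i$, and the size, area and touch bookkeeping all break. The correct partition is by whether the maximal step starts on the diagonal ($p$ of them, of which $i$ are decorated valleys) or not ($m-p$ of them, decorated or not); only the $p$ diagonal ones are deleted.

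Second, and more seriously, your explanation of the factor $q^{\binom{i}{2}}$ is wrong: you attribute it to ``mutual secondary inversions among the $i$ equal maximal valley labels'', but two steps carrying equal labels never form a diagonal inversion at all (a primary inversion requires $w_i<w_j$ and a secondary one requires $w_i>w_j$), and two steps both starting on the main diagonal can never form a secondary inversion in any case. The factor actually comes from the contractibility of the decorated diagonal valleys combined with the $-\#dv$ term in the dinv: each decorated maximal valley on the diagonal must be preceded by a non-decorated, positively and non-maximally labelled diagonal step, and two such valleys must be separated by one, so the $j$-th of them carries at least $j$ primary inversions with those steps; subtracting the $i$ units lost to the decorations leaves a guaranteed net contribution of $\binom{i}{2}$, with the interleavings above this minimum counted by $\qbinom{r-p+i}{i}_q$. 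You also defer, without resolving it, the check that pushing an off-diagonal maximal step preserves dinv (primary pairs involving it become secondary and vice versa); this is short, but it is needed for the fiber generating function to come out exactly as stated rather than only at $q=1$.
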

        \begin{proof}
        Start from an element enumerated in the left hand side of the equation: a Dyck path $P$ of size $n$, with $k$ decorations on valleys, touch $r$ and $m$ maximal labels. 
        We apply what we call the \emph{pushing algorithm}, which comprises two operations. Note that any vertical step $v$ labelled with a maximal label must be followed by a horizontal step $h$. Let $v$ be any such step.
        \begin{enumerate}
        \item If the starting point of $v$ lies on the main diagonal, delete $v$ and $h$. If $v$ was a decorated valley, this decoration also gets deleted.
        \item If the starting point of $v$ does not lie on the main diagonal, replace $vh$ by $hv$ and change the label of $v$ (which was a maximal label) to $0$. If $v$ was a decorated valley, it remains so. See Figure~\ref{fig:push-label}. This operation yields a valid Dyck path since $v$ did not touch the main diagonal. The labelling also stays valid as $0$ is smaller than any label of $P$.    
        \end{enumerate}
    \begin{figure}[!ht]
        \begin{minipage}{.5\textwidth}
        \begin{center}
            \begin{tikzpicture}[scale = .6]
            \draw[blue!60, line width = 2pt] (0,0)|-(1,1) (2,0)-|(3,1);
            \draw (.5,.5) circle (.4cm) node {$M$};
            \draw (3.5,.5) circle (.4cm) node {$0$};
            \node at (1.5,.5) {$\rightarrow$}; 
            \end{tikzpicture}  
        \end{center}  
        \end{minipage}%
        \begin{minipage}{.5\textwidth}
        \begin{center}
            \begin{tikzpicture}[scale = .6]
            \draw[blue!60, line width = 2pt] (0,0)|-(1,1) (2,0)-|(3,1);
            \draw (.5,.5) circle (.4cm) node {$M$};
            \draw (3.5,.5) circle (.4cm) node {$0$};
            \node at (1.5,.5) {$\rightarrow$}; 
            \node at (-.5,.5) {$\bullet$};
            \node at (2.5,.5) {$\bullet$};
        \end{tikzpicture}  
        \end{center}
        \end{minipage}
        \caption{``Pushing'' a step labelled with a maximal label $M$.}\label{fig:push-label}
    \end{figure}
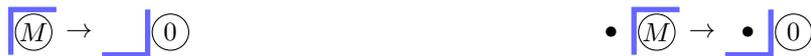
    We apply this procedure to all $m$ steps labelled with a maximal label. See Figure~\ref{fig:push-algorithm} for an example. Let $p$ be the number of vertical steps starting from the main diagonal with a maximal label. Let $i$ be the number of such steps that are decorated valleys. It follows that after applying the pushing algorithm, we obtain a path $\tilde{P}$ of size $n-p$, with $k-i$ decorations and $m-p$ zero labels. Thus, $\tilde{P}\in \LD(m-p,n-m)^{\bullet k-i}$. Since the touch does not take into account steps labelled $0$ or decorated steps starting from the main diagonal, the touch of $\tilde{P}$ is $r-(p-i)$. 
    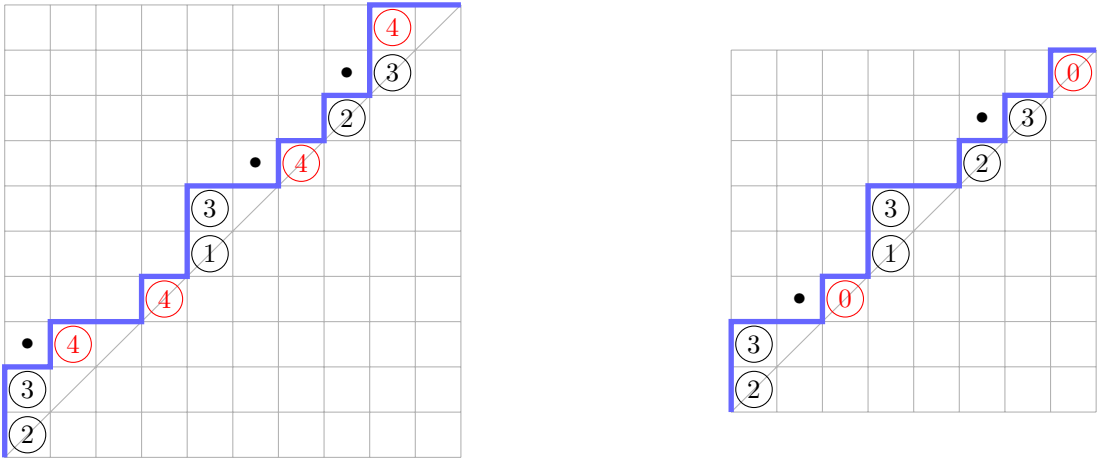
\begin{figure}[!ht]
        \begin{minipage}{.6\textwidth}
        \begin{tikzpicture}[scale = .6]
            \draw[gray!60, thin](0,0) grid (10,10) (0,0)--(10,10);      
            \draw[blue!60, line width=2pt] (0,0)|-(1,2)|-(3,3)|-(4,4)|-(6,6)|-(7,7)|-(8,8)|-(10,10);
            \draw (.5,.5) circle (.4cm) node {$2$};
            \draw (.5,1.5) circle (.4cm) node {$3$};
            \draw[red] (1.5,2.5) circle (.4cm) node {$4$};
            \draw[red] (3.5,3.5) circle (.4cm) node {$4$};
            \draw (4.5,4.5) circle (.4cm) node {$1$};
            \draw (4.5,5.5) circle (.4cm) node {$3$};
            \draw[red] (6.5,6.5) circle (.4cm) node {$4$};
            \draw (7.5,7.5) circle (.4cm) node {$2$};
            \draw (8.5,8.5) circle (.4cm) node {$3$};
            \draw[red] (8.5,9.5) circle (.4cm) node {$4$};
            \node at (.5,2.5) {$\bullet$};
            \node at (5.5,6.5) {$\bullet$};
            \node at (7.5,8.5) {$\bullet$};
        \end{tikzpicture}
        \end{minipage}%
        \begin{minipage}{.4\textwidth}
        \begin{tikzpicture}[scale = .6]
            \draw[gray!60, thin](0,0) grid (8,8) (0,0)--(8,8);      
            \draw[blue!60, line width=2pt] (0,0)|-(2,2)|-(3,3)|-(5,5)|-(6,6)|-(7,7)|-(8,8);
            \draw (.5,.5) circle (.4cm) node {$2$};
            \draw (.5,1.5) circle (.4cm) node {$3$};
            \draw[red] (2.5,2.5) circle (.4cm) node {$0$};
    
            \draw (3.5,3.5) circle (.4cm) node {$1$};
            \draw (3.5,4.5) circle (.4cm) node {$3$};
    
            \draw (5.5,5.5) circle (.4cm) node {$2$};
            \draw (6.5,6.5) circle (.4cm) node {$3$};
            \draw[red] (7.5,7.5) circle (.4cm) node {$0$};
            \node at (1.5,2.5) {$\bullet$};
            \node at (5.5,6.5) {$\bullet$};
        \end{tikzpicture}
        \end{minipage}
        \caption{The pushing algorithm.}\label{fig:push-algorithm}
    \end{figure}
    
    Clearly, performing (1) does not change the area and performing (2) reduces the area by one unit. Since we apply (2) $m-p$ times, we have 
    \begin{align*}
        \area(P)= \area(\tilde{P}) + m-p;
    \end{align*} which explains the factor $t^{m-p}$.
    
    Let us now study what happens to the dinv. Performing (2) does not alter the dinv since any primary dinv pair involving $v$ becomes a secondary dinv pair and vice versa. For (1), let us distinguish three types of steps on the main diagonal of $P$:
    \begin{enumerate}[(a)]
        \item non-decorated steps with a maximal label, of which there are $p-i$;
        \item decorated steps with a maximal label, of which there are $i$;
        \item non-decorated steps with a non-maximal label, of which there are $r-(p-i)$. 
    \end{enumerate}
    The steps of type (a) and (b) get deleted by the algorithm, so we must determine how they contribute to the dinv. The only dinv created by steps of type (a) is primary dinv with steps of type (c). So the contribution to the dinv for the steps of type (a) depends on the interlacing of these two types of steps and is $q$-counted by $\qbinom{r}{p-i}_q$. Similarly, the only dinv created by steps of type (b) is primary dinv with steps of type (c). The contractibility of the decorated valleys implies that 
    \begin{itemize}
        \item there must be a step of type (c) before the first occurrence of a step of type (b);
        \item between two steps of type (b), there must be a step of type (c);
    \end{itemize}
    However, there may be a step of type (b) after all the steps of type (c). Thus, the contribution to the dinv for the steps of type (b) is $q$-counted $q^{\binom{i}{2}}\qbinom{r-(p-i)}{i}_q$. 
    
    Taking the sum over all the possible $p$'s and $i$'s, we get the announced formula. 
    \end{proof}

    \begin{theorem}[Conditional generalised Delta conjecture, valley version] \hfill \\
        If for $n,k,r \in \N$ the identity \[ \Theta_{e_k} \nabla E_{n-k, r} = \sum_{\substack{P \in \LD(n)^{\bullet k}\\ \touch(P)= r}} q^{\dinv(P)} t^{\area(P)} x^P \] holds, then for $m,n,k,r \in \N$, the identity \[ \Delta_{h_m} \Theta_{e_k} \nabla E_{n-k, r} = \sum_{\substack{P \in \LD(m, n)^{\bullet k}\\ \touch(P)= r}} q^{\dinv(P)} t^{\area(P)} x^P \] also holds.
    \end{theorem}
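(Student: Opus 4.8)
The plan is to prove the statement by induction on $m$, using Theorem~\ref{thm:sf-identity} as the engine and combining it with Theorem~\ref{thm:pushing-theorem} and the hypothesis to pin down its left-hand side combinatorially. The base case $m=0$ is immediate: since $\Delta_{h_0}$ acts as the identity and $\LD(0,n)^{\bullet k}=\LD(n)^{\bullet k}$, the desired identity for $m=0$ is exactly Conjecture~\ref{conj:hypothesis}, which is the assumed hypothesis.

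For the inductive step, I would fix the target parameters and set $n=N+m$, $k=K$, $r=R$, so that the instance of Conjecture~\ref{conj:thesis} to be proved sits at $\Delta_{h_m}\Theta_{e_K}\nabla E_{N-K,R}$ and paths living in $\LD(m,N)^{\bullet K}$. Applying Theorem~\ref{thm:sf-identity} at $(m,n,k,r)$, its left-hand side $h_m^\perp\Theta_{e_k}\nabla E_{n-k,r}$ is rewritten combinatorially in two moves: the hypothesis supplies the combinatorial meaning of $h_m^\perp$ (select the paths with exactly $m$ maximal labels and set that label's variable to $1$), and Theorem~\ref{thm:pushing-theorem} then re-expresses that sum as the double sum over $p$ and $i$ of generating functions over the smaller sets $\LD(m-p,n-m)^{\bullet k-i}$ with touch $r-p+i$. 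The point is that the coefficients $t^{m-p}q^{\binom{i}{2}}\qbinom{r-p+i}{i}_q\qbinom{r}{p-i}_q$ produced here are literally the ones appearing on the right-hand side of the symmetric function identity.

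The conclusion then comes from equating the right-hand side of Theorem~\ref{thm:sf-identity} with this combinatorial double sum and matching term by term in $(p,i)$. For every term with $p\geq 1$ we have $m-p<m$, so the inductive hypothesis applies to $\Delta_{h_{m-p}}\Theta_{e_{k-i}}\nabla E_{n-m-(k-i),\,r-p+i}$ and identifies it with the corresponding combinatorial sum over $\LD(m-p,n-m)^{\bullet k-i}$ with touch $r-p+i$; these matched pairs cancel on both sides. The only surviving contribution is $p=0$, which forces $i=0$ and carries coefficient $t^{m}q^{0}\qbinom{r}{0}_q\qbinom{r}{0}_q=t^{m}$. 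Dividing through by $t^{m}$ leaves precisely $\Delta_{h_m}\Theta_{e_k}\nabla E_{n-m-k,r}$ equal to the generating function over $\LD(m,n-m)^{\bullet k}$ with touch $r$; substituting $n-m=N$, $n-m-k=N-K$, $r=R$ recovers Conjecture~\ref{conj:thesis} for $(m,N,K,R)$.

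The difficulty here is bookkeeping rather than anything conceptual, since both heavy tools are already available. The care goes into checking that the $E$-indices and the size/label-count data of the path sets line up, so that each $p\geq 1$ term of the symmetric side is a genuine instance of the conjecture at level $m-p$ (namely at $(m-p,\,n-m,\,k-i,\,r-p+i)$, for which one verifies $n-m-(k-i)=(n-m)-(k-i)$), and so that the $p=0$ term isolates exactly the target. In particular I would confirm that the hypothesis is invoked at precisely the parameters $(n,k,r)=(N+m,K,R)$ needed for the $h_m^\perp$ interpretation to hold, and track the edge conditions (such as $k<n$) to be sure every intermediate instance is one to which the inductive hypothesis legitimately applies.
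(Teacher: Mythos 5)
Your proposal is correct and follows essentially the same route as the paper: induction on $m$ with the $m=0$ case being the hypothesis itself, using Theorem~\ref{thm:sf-identity} together with the combinatorial interpretation of $h_m^\perp$ and Theorem~\ref{thm:pushing-theorem} to equate the two double sums, cancelling all $p\geq 1$ terms by the inductive hypothesis, and isolating the $p=0$ (hence $i=0$) term with coefficient $t^m$. The only difference is cosmetic: you make explicit the parameter bookkeeping and the reindexing $n-m=N$ that the paper phrases as ``up to the substitution $n \mapsto n+m$ and a division by $t^m$.''
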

    
    \begin{proof}
        We proceed by induction on $m$. For $m=0$, the statement is exactly the valley version of the Delta conjecture, which we are assuming to hold.
    
        For $m > 0$, by Theorem~\ref{thm:sf-identity} we have
        \begin{align*}
            h_m^\perp \Theta_{e_k} \nabla E_{n-k,r} = \sum_{p=0}^m t^{m-p} \sum_{i=0}^p q^{\binom{i}{2}} \qbinom{r-p+i}{i}_q \qbinom{r}{p-i}_q \Delta_{h_{m-p}} \Theta_{e_{k-i}} \nabla E_{n-m-(k-i), r-p+i}.
        \end{align*}
        
    
        By Theorem~\ref{thm:pushing-theorem}, we can rewrite the statement of Theorem~\ref{thm:sf-identity} as 
    
        \begin{align*}
            \sum_{p=0}^m & t^{m-p} \sum_{i=0}^p q^{\binom{i}{2}} \qbinom{r-p+i}{i}_q \qbinom{r}{p-i}_q \sum_{\substack{P \in \LD(m-p, n-m )^{\bullet k-i}\\ \touch(P)= r-p+i}} q^{\dinv(P)} t^{\area(P)} x^P \\ & = \sum_{p=0}^m t^{m-p} \sum_{i=0}^p q^{\binom{i}{2}} \qbinom{r-p+i}{i}_q \qbinom{r}{p-i}_q \Delta_{h_{m-p}} \Theta_{e_{k-i}} \nabla E_{n-m-(k-i), r-p+i}.
        \end{align*}
    
        By induction hypothesis, whenever $p > 0$ we have \[ \Delta_{h_{m-p}} \Theta_{e_{k-i}} \nabla E_{n-m-(k-i), r-p+i} = \sum_{\substack{P \in \LD(m-p, n-m )^{\bullet k-i}\\ \touch(P) =  r-p+i}} q^{\dinv(P)} t^{\area(P)} x^P \]
    
        so all the terms of the sum except the one when $p=0$ cancel out, and we are left with \[ t^m \sum_{\substack{P \in \LD(m, n-m)^{\bullet k}\\ \touch(P)= r}} q^{\dinv(P)} t^{\area(P)} x^P = t^m \Delta_{h_m} \Theta_{e_k} \nabla E_{n-m-k, r} \] which is, up to the substitution $n \mapsto n+m$ and a division by $t^m$, exactly what we wanted to show.
        \end{proof}
    \section{Acknowledgements}The authors would like to thank Michele D'Adderio for the many interesting discussions on the topic.
    \bibliography{bibliography}
    \bibliographystyle{amsalpha}
\end{document}